%%%%%%%%%%%%%%%%%%%%%%%%%%%%%%%%%%%%%%%%%%%%%%%%%%%%%%%
%% Article with numbered style bibliographic references
%% ----------------------------------------------------
%% Title: 
%% Factorial Polynomials and Associated Number Families		
%% ----------------------------------------------------
%% Submitted   20XX-XX-XX
%% Revised     2023-12-09
%% ----------------------------------------------------
%% Author: 
%%		Prof. Dr. Alfred Schreiber
%% Private address: 
%% 		Friedrich-Hegel-Strasse 8 
%%		D-01187 Dresden
%%		Germany
%% Private email:
%%		info@alfred-schreiber.de
%%
%%%%%%%%%%%%%%%%%%%%%%%%%%%%%%%%%%%%%%%%%%%%%%%%%%%%%%%
%%
\documentclass[preprint,number,12pt]{elsarticle}
%%
%% AMS packages:
\usepackage{amsmath}
\usepackage{amssymb}
\usepackage{amsthm}
%%
%% Other stuff:
%\usepackage{nicefrac}
%%
%% Specific library package (authored by Alfred Schreiber):
\usepackage{Factorial_Polynomials}
%%
%% Start with Section 1:
\setcounter{section}{0}
%%
%%-------------------------------------------------------
\begin{document}
\begin{frontmatter}
\title{Factorial Polynomials and Associated Number Families}
\author{Alfred Schreiber | \today}
\address{Department of Mathematics and Mathematical Education, University of Flensburg,\\Auf dem Campus 1, D-24943 Flensburg, Germany}
\ead{info@alfred-schreiber.de}
\begin{abstract}
Two doubly indexed families of polynomials in several indeterminates are considered. They are related to the falling and rising factorials in a similar way as the potential polynomials (introduced by L. Comtet) are related to the ordinary power function. We study the inversion relations valid for these factorial polynomials as well as the number families associated with them.
\end{abstract}
\begin{keyword}
Potential polynomials \sep Factorial polynomials \sep Inverse relations \sep Stirling numbers \sep Lah numbers
\enlargethispage{5ex}
\MSC[2010] 05A10 \sep 11B65 \sep 11B73 \sep 11B83 \sep 11C08 
\end{keyword}
\end{frontmatter}
%%
%% Main text:
%%-----------
%% Section 1: 
\section{Basic notions}\label{section1}
\noindent
In what follows, we denote by $\funcs$ the algebra $\const[[x]]$ of formal power series in $x$ with coefficients in a fixed commutative field $\const$ of characteristic zero. The elements of $\funcs$ will be called \emph{functions}, those of $\const$ \emph{constants} (or \emph{numbers}). For any functions $f,g$ and constant $c$, the sum $f+g$, the scalar product $cf$, and the product $f\cdot g$ are, as customary, assumed to be defined  coordinate-wise and by Cauchy convolution, respectively. Furthermore, we consider the composition $\comp$ with $(f\comp g)(x):=f(g(x))$ which is a partial operation on $\funcs$, but well-defined, for example, if the leading coefficient of $g$ is zero or, otherwise, $f$ is a (Laurent) polynomial in $x^{-1},x$ with coefficients in $\const$. The identity element is, of course, $\id=\id(x):=x$, satifying $f\comp\id=\id\comp f=f$.

The ordinary algebraic derivation $D$ on $\const[x]$ (with $D(\id)=1$) can be extended, in a unique way, to a derivation on $\funcs$ (here also denoted by $D$) for which the known rules for addition, multiplication and composition apply \cite[p.\,15, 61]{schr2021}. Iterating $D$ leads, in the usual way, to derivatives of higher order $D^n(f)$. By setting $f_n:=D^n(f)(0)$, we obtain, as is well known, the representation of $f(x)$ in the form of a Taylor series  expansion: 
\[
	f(x)=\sum_{n\geq0}f_n\frac{x^n}{n!}. 
\]
The constants $f_0,f_1,f_2,\ldots\in\const$ are called \emph{Taylor coefficients of} $f$.

\section{Potential polynomials}\label{section2}
\noindent
Of special interest is the task of finding the general Taylor coefficient of a composite function $f\comp g$.  Its well known solution is given by a famous formula of Fa\`{a} di Bruno (cf. \cite[p.\,137]{comt1974} and \cite[eqs. (1.3) and (4.1)]{schr2015}), which in modern notation is
\begin{equation}\label{faadibruno}
	D^n(f\comp g)(0)=\sum_{k=0}^{n}D^k(f)(g_0)B_{n,k}(g_1,\ldots,g_{n-k+1}).
\end{equation}
Here $B_{n,k}$ are the \emph{partial Bell polynomials} (or: \emph{exponential polynomials}) that can be represented in the form of a `diophantine' sum
\begin{equation*}
	B_{n,k}=\sum\frac{n!}{r_1!r_2!\dotsm(1!)^{r_1}(2!)^{r_2}\dotsm}X_1^{r_1}X_2^{r_2}\dotsm X_{n-k+1}^{r_{n-k+1}}
\end{equation*}
to be taken over all sequences of integers $r_1,r_2,r_3,\ldots\geq 0$ such that $r_1+r_2+r_3+\dotsb=k$ and $r_1+2r_2+3r_3+\dotsm=n$ (see \cite[p.\,134]{comt1974}, \cite[p.\,26, 65]{schr2021}). By replacing the Taylor coefficients $g_j$ with indeterminates $X_j$ on the right-hand side of \eqref{faadibruno}, one obtains a polynomial dependent on $f$ (in \cite{schr2020,schr2021} called the \emph{Fa\`{a} di Bruno polynomial of} $f$). An important special case is $f=\id^k$, $k\in\integers$, which leads to the \emph{potential polynomials} 
\begin{equation}\label{potentialpolynomials}
	P_{n,k}=\sum_{j=0}^{n}\ffac{k}{j}X_{0}^{k-j}B_{n,j}
\end{equation}
introduced by Comtet \cite[p.\,141]{comt1974} and extensively studied in \cite{comt1974,schr2020, schr2021}. Here $\ffac{k}{j}$ is D. Knuth's symbol for the falling factorial power $k(k-1)\cdots(k-j+1)$, and $\ffac{k}{j}=1$ for $j=0$. 

If $g\in\funcs$ is (compositionally) invertible, then $g_0=g(0)=0$, and we obtain from \eqref{faadibruno} $D^n(g^k)(0)=k!B_{n,k}(g_1,\ldots,g_{n-k+1})$, that is, $B_{n,k}(g_1,\ldots,g_{n-k+1})$ is the $n$th Taylor coefficient of $g(x)^{k}/k!$. For completeness, we note that there also exists a polynomial expression $A_{n,k}(g_1,\ldots,g_{n-k+1})$ for the $n$th Taylor coefficient of $\inv{g}(x)^k/k!$, where $\inv{g}$ denotes the unique inverse of $g$ (with $g\comp\inv{g}=\inv{g}\comp g=\id$). The fundamental properties (recurrences, inverse relations, reciprocity laws) of these two families of polynomials are treated in detail in \cite{schr2015,schr2020,schr2021}. Here we make use of the fact that the (lower triangular) matrices $(A_{n,k})$ and $(B_{n,k})$ are inverses of each other with respect to matrix multiplication, more precisely:
\begin{equation}\label{inversion_AB}
	\sum_{j=k}^{n}A_{n,j}B_{j,k}=\kronecker{n}{k}\quad(1\leq k\leq n),
\end{equation}
where $\kronecker{n}{n}=1$, $\kronecker{n}{k}=0$ if $n\neq k$ (Kronecker symbol); see \cite[p.\,29 and p.\,82]{schr2021}.

Let $Q_n$ be any sequence of polynomials from $\const[X_1,X_2,\ldots]$. We then call the sequence of numbers $q(n):=Q_n(1,1,\ldots)$, obtained by replacing each indeterminate occurring in $Q_n$ by 1, \emph{associated with} $Q_n$. Thus $q(n)$ equals the sum of the coefficients of $Q_n$. For example, it is well known that the family of numbers $s_2(n,k):=B_{n,k}(1,\ldots,1)$ associated with the partial Bell polynomials consists just of the Stirling numbers of the second kind \cite[Thm.\,B, p.\,135]{comt1974}. Together with the signed Stirling numbers of the first kind $s_1(n,k)$, they satisfy the inverse relation $\sum_{j=k}^{n}s_1(n,j)s_2(j,k)=\kronecker{n}{k}$ (see, for example, \cite[Prop.\,1.4.1(a)]{stan1986}). Therefore, specializing all indeterminates to 1 in \eqref{inversion_AB}, $A_{n,k}(1,\ldots,1)$ turns out to be $s_1(n,k)$.

In his famous treatise \textit{Methodus differentialis} (London, 1730) J. Stirling introduced the number family $s_1(n,k)$ by expanding $\ffac{x}{n}$ into a polynomial in standard form
\begin{equation}\label{stirlingnumbers_1}
	\ffac{x}{n}=x(x-1)\cdots(x-n+1)=\sum_{k=0}^{n}s_1(n,k)x^k.
\end{equation}
Due to the inverse relationship of the Stirling numbers of the first and second kind \cite[Prop.\,1.4.1(b)]{stan1986} we get from \eqref{stirlingnumbers_1}
\begin{equation}\label{stirlingnumbers_2}
	x^n=\sum_{k=0}^{n}s_2(n,k)\ffac{x}{k}.
\end{equation}
Comparing now \eqref{stirlingnumbers_2} with \eqref{potentialpolynomials}, we find that $P_{n,k}(1,\ldots,1)=k^n$ holds, i.e., $k^n$ is the number family associated with the potential polynomials $P_{n,k}$.

\begin{rem}
The power terms $k^n,\ffac{k}{n},\rfac{k}{n}$ have an obvious combinatorial meaning that entails a natural combinatorial interpretation of equations \eqref{stirlingnumbers_1} and \eqref{stirlingnumbers_2} (see, for example, \cite[p.\,33--35]{stan1986}). Also the Stirling numbers have a combinatorial meaning: $s_2(n,k)$ counts the ways $n$ objects can be divided into $k$ non-empty subsets (`subset numbers'), whereas the signless expression $c(n,k):=|s_1(n,k)|=(-1)^{n-k}s_1(n,k)$ represents the number of permutations of $n$ objects having $k$ cycles (`cycle numbers').
\end{rem}

\section{Factorial polynomials}\label{section3}
\noindent
Analogous to the way in which the potential polynomials were defined as Fa\`{a} di Bruno polynomials of $\id^k$, let us now introduce \emph{lower and upper factorial polynomials} as Fa\`{a} di Bruno polynomials of the falling and rising factorial power functions $\ffac{\id}{k}$ and $\rfac{\id}{k}$, respectively:
\begin{align}
		\label{facpoly_1}
		P_{n,\underline{k}}&:=\sum_{j=0}^{n}D^j(\ffac{\id}{k})(X_0)B_{n,j}(X_1,\ldots,X_{n-j+1})\\
		\label{facpoly_2}
		P_{n,\overline{k}}&:=\sum_{j=0}^{n}D^j(\rfac{\id}{k})(X_0)B_{n,j}(X_1,\ldots,X_{n-j+1})
\end{align}
We first consider \eqref{facpoly_1}. Since $D^j$ is a linear operator, applying \eqref{stirlingnumbers_2} to $\ffac{\id}{k}$ and observing \eqref{potentialpolynomials} yields
\begin{align}
	P_{n,\underline{k}}&=\sum_{j=0}^{n}\sum_{r=0}^{k}s_1(k,r)D^j(\id^r)(X_0)B_{n,j}\notag\\
			               &=\sum_{r=0}^{k}s_1(k,r)\sum_{j=0}^{n}\ffac{r}{j}X_{0}^{r-j}B_{n,j}\notag\\
										 \label{facpoly_1topotential}
										 &=\sum_{r=0}^{k}s_1(k,r)P_{n,r}\\
\intertext{which can be reversed to}
  \label{potentialtofacpoly_1}
	P_{n,k} &=\sum_{r=0}^{k}s_2(k,r)P_{n,\underline{r}}.
\end{align}
In a similar way \eqref{facpoly_2} can be evaluated. The only thing to keep in mind is that $\rfac{\id}{k}=(-1)^k\ffac{(-\id)}{k}$ and therefore by \eqref{stirlingnumbers_1}
\begin{align}
	\rfac{\id}{k}=(-1&)^k\sum_{r=0}^{k}s_1(k,r)(-\id)^r=\sum_{r=0}^{k}c(k,r)\id^r,\notag\\
\intertext{whence we readily obtain}
	\label{facpoly_2topotential}
							 P_{n,\overline{k}}=&\sum_{r=0}^{k}c(k,r)P_{n,r}.
\end{align}
To be able to establish also the connection between $P_{n,\overline{k}}$ and $P_{n,\underline{k}}$, recall the numbers $l(n,k)$ introduced by I.\,Lah \cite{lah1955}, \cite[p.\,43--44]{rior2002}: 
\begin{equation}\label{lahnumbers}
	l(n,k):=(-1)^n\frac{n!}{k!}\binom{n-1}{k-1}\quad\text{with}\quad\rfac{x}{n}=\sum_{k=0}^{n}l(n,k)\ffac{x}{k}.
\end{equation}
\begin{rem}
	The unsigned \emph{Lah numbers} $|l(n,k)|$ count the ways a set of $n$ objects can be partitioned into $k$ non-empty linearly ordered subsets.
\end{rem}
Two well-known remarkable properties of the signed Lah numbers come into play in what follows: their representability by the Stirling numbers, and the fact that they are inverses of themselves (see \cite[p.\,44]{rior2002} and \cite[p.\,31,\,91]{schr2021}):
\begin{align}
	\label{lahbystirling}
	\sum_{j=k}^n (-1)^j s_1(n,j)s_2(j,k)&=l(n,k),\\
	\label{lahselfinverse}
	\sum_{j=k}^{n}l(n,j)l(j,k)&=\kronecker{n}{k}.
\end{align}
\begin{prop}\label{facpoly_1_and_2}
For all non-negative integers $n,k$ we have
\begin{align*}
\text{\emph{(i)}~~~}P_{n,\overline{k}}&=\sum_{j=0}^k(-1)^{k}l(k,j)P_{n,\underline{j}},\\
\text{\emph{(ii)}~~~}P_{n,\underline{k}}&=\sum_{j=0}^k(-1)^{j}l(k,j)P_{n,\overline{j}}.
\end{align*}
\end{prop}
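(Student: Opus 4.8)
The plan is to derive both identities by composing the two already-established ``change of basis'' relations \eqref{facpoly_1topotential}--\eqref{potentialtofacpoly_1} and \eqref{facpoly_2topotential} through the potential polynomials $P_{n,r}$, using the Lah numbers as the connecting coefficients. Concretely, for part (i) I would start from \eqref{facpoly_2topotential}, namely $P_{n,\overline k}=\sum_{r=0}^k c(k,r)P_{n,r}$, and then substitute \eqref{potentialtofacpoly_1}, $P_{n,r}=\sum_{j=0}^r s_2(r,j)P_{n,\underline j}$, to obtain
\begin{equation*}
P_{n,\overline k}=\sum_{r=0}^k c(k,r)\sum_{j=0}^r s_2(r,j)P_{n,\underline j}
=\sum_{j=0}^k\Bigl(\sum_{r=j}^k c(k,r)s_2(r,j)\Bigr)P_{n,\underline j}.
\end{equation*}
So the whole proof reduces to the coefficient identity $\sum_{r=j}^k c(k,r)s_2(r,j)=(-1)^k l(k,j)$.

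The key step is therefore to recognize that this inner sum is exactly \eqref{lahbystirling} in disguise. Since $c(k,r)=(-1)^{k-r}s_1(k,r)$, we have
\begin{equation*}
\sum_{r=j}^k c(k,r)s_2(r,j)=(-1)^k\sum_{r=j}^k(-1)^{-r}s_1(k,r)s_2(r,j)=(-1)^k\sum_{r=j}^k(-1)^{r}s_1(k,r)s_2(r,j)=(-1)^k l(k,j),
\end{equation*}
where the last equality is precisely \eqref{lahbystirling} (with summation index renamed). This proves (i). For part (ii) the cleanest route is to invoke \eqref{lahselfinverse}: treating (i) as a lower-triangular linear system expressing the vector $(P_{n,\overline j})_j$ in terms of $(P_{n,\underline j})_j$ with matrix entries $(-1)^k l(k,j)$, I would multiply by the inverse matrix. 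Since $\sum_{r} l(k,r) l(r,j)=\kronecker{k}{j}$, one checks $\sum_{r}(-1)^r l(k,r)\cdot(-1)^r l(r,j)=\sum_r l(k,r)l(r,j)=\kronecker{k}{j}$, so the matrix with entries $(-1)^k l(k,j)$ is self-inverse up to the same sign pattern; applying it to (i) yields
\begin{equation*}
P_{n,\underline k}=\sum_{j=0}^k(-1)^{j}l(k,j)P_{n,\overline j},
\end{equation*}
which is (ii).

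Alternatively, (ii) can be proved symmetrically to (i): start from \eqref{facpoly_1topotential}, $P_{n,\underline k}=\sum_{r=0}^k s_1(k,r)P_{n,r}$, substitute the inverse of \eqref{facpoly_2topotential} (which expresses $P_{n,r}$ via $P_{n,\overline j}$ with coefficients given by the inverse of the matrix $(c(k,r))$), and again collapse the double sum using a Stirling-number identity. I would likely present the self-inverse argument, as it is shorter and makes the role of \eqref{lahselfinverse} transparent. The only mild obstacle is bookkeeping of the signs and the summation ranges — in particular making sure the finite sums have the ranges $0\le j\le r\le k$ so that interchange of summation is unproblematic — but there is no analytic subtlety here; everything is a finite manipulation over triangular matrices, and the two cited Lah-number facts \eqref{lahbystirling} and \eqref{lahselfinverse} do all the real work.
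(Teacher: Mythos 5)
Your proof is correct and follows essentially the same route as the paper: part (i) by substituting \eqref{potentialtofacpoly_1} into \eqref{facpoly_2topotential}, rewriting $c(k,r)=(-1)^{k-r}s_1(k,r)$, and collapsing the double sum via \eqref{lahbystirling}; part (ii) by applying the self-inverse property \eqref{lahselfinverse}, which is exactly the paper's computation phrased in matrix language. No gaps to report.
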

\begin{proof}
We have
\begin{align*}
	P_{n,\overline{k}}&=\sum_{r=0}^{k}c(k,r)P_{n,r}&\text{(by \eqref{facpoly_2topotential})}\\
										&=\sum_{r=0}^{k}\sum_{j=0}^{r}c(k,r)s_2(r,j)P_{n,\underline{j}}&\text{(by \eqref{potentialtofacpoly_1})}\\
										&=\sum_{r=0}^{k}\sum_{j=0}^{r}(-1)^{k-r}s_1(k,r)s_2(r,j)P_{n,\underline{j}}&\text{(by Remark 2.1)}\\
										&=\sum_{j=0}^{k}(-1)^k\left(\sum_{r=j}^{k}(-1)^{r}s_1(k,r)s_2(r,j)\right)P_{n,\underline{j}}&\text{(rearranging the double sum)}.
\end{align*}
Now, applying \eqref{lahbystirling} to the inner sum, we obtain assertion (i).\\
Assertion (ii) follows from (i) with respect to \eqref{lahselfinverse} as follows:
\begin{align*}
 \sum_{j=0}^{k}(-1)^{j}l(k,j)P_{n,\overline{j}}&=\sum_{j=0}^{k}(-1)^{j}l(k,j)\sum_{r=0}^{j}(-1)^{j}l(j,r)P_{n,\underline{r}}\\
	&=\sum_{r=0}^{k}\left(\sum_{j=r}^{k}(-1)^{2j}l(k,j)l(j,r)\right)P_{n,\underline{r}}\\
	&=\sum_{r=0}^{k}\kronecker{k}{r}P_{n,\underline{r}}=P_{n,\underline{k}}.\qedhere
\end{align*}
\end{proof}
%%

%% Section 4: 
\section{Associated number families}\label{section4}
\noindent
This section deals with the number families associated with the upper and lower factorial polynomials:
\begin{align}
	\label{facnum_upper}
	[n]_{\overline{k}}&:=P_{n,\overline{k}}(1,\ldots,1)=\sum_{r=0}^{k}c(k,r)r^n,\\
	\label{facnum_lower}
	[n]_{\underline{k}}&:=P_{n,\underline{k}}(1,\ldots,1)=\sum_{r=0}^{k}s_1(k,r)r^n.
\end{align}
To be more consistent with the combinatorial interpretation of the Stirling numbers (as explained in Remark 2.1), we will use in some cases the notation of J. Karamata recommended by Knuth \cite{knut1992}:
\begin{equation*}
	s_2(n,k)=\subsets{n}{k} \qquad \text{and} \qquad c(n,k)=\cycles{n}{k}.
\end{equation*}
We start with some preparatory remarks. From \eqref{stirlingnumbers_2} we readily obtain $r^n=\sum_{j=0}^{n}j!\binom{r}{j}s_2(n,j)$ which implies an identity that reduces a power sum with arbitrary weights $\gamma_{k,r}$, $1\leq r\leq k$, and integer exponent $n\geq 1$ as follows:
\begin{equation}\label{powersum}
	\sum_{r=1}^{k}\gamma_{k,r}r^n = \sum_{j=1}^{\min(k,n)}j!s_2(n,j)\sum_{r=j}^{k}\binom{r}{j}\gamma_{k,r}.
\end{equation}
There are quite a few cases allowing the inner sum on the right-hand side of \eqref{powersum} to be simplified significantly, that is, we would obtain an upper summation rule such as
\begin{equation}\label{sumrule}
	\sum_{r=j}^{k}\binom{r}{j}\gamma_{k,r}=f(k,j)
\end{equation}
with a more or less closed expression $f(k,j)$. 

\begin{rem}
The most simple example is $\gamma_{k,r}\equiv 1$, yielding in \eqref{sumrule} the term $f(k,j)=\binom{k+1}{j+1}$. With this, \eqref{powersum} becomes the familar identity
\begin{equation}\label{psw1}
	1^n+2^n+\cdots+k^n = \sum_{j=1}^{\min(k,n)}j!\subsets{n}{j}\binom{k+1}{j+1}.
\end{equation}
The reader may find in Hsu \cite{hsu1991} a great variety of choices for the $\gamma_{k,r}$. However, the case of Stirling numbers as weights has not been considered in those discussions.
\end{rem}

Boyadzhiev \cite{boya2009} has evaluated \eqref{powersum} for the cycle numbers $\gamma_{k,r}=c(k,r)$ thereby taking $f(k,j)=c(k+1,j+1)$, which turns \eqref{sumrule} into a well-known identity (see, for example, \cite[p.\,68, eq.\,(51)]{knut1997} ). This gives immediately the following nice result \cite[Prop.\,2.7]{boya2009} that has a remarkable analogy to \eqref{psw1}:
\begin{prop}\label{facnum_upper_explicit}
\[
	[n]_{\overline{k}}=\sum_{r=1}^{k}\cycles{k}{r}r^n=\sum_{j=1}^{\min(k,n)}j!\subsets{n}{j}\cycles{k+1}{j+1}.
\]	
\end{prop}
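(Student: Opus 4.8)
The plan is to start from the definition \eqref{facnum_upper}, namely $[n]_{\overline{k}}=\sum_{r=0}^{k}c(k,r)r^n$, and observe that the $r=0$ term vanishes for $n\geq1$ (and $c(k,0)=0$ for $k\geq1$ anyway), so we may write $[n]_{\overline{k}}=\sum_{r=1}^{k}c(k,r)r^n=\sum_{r=1}^{k}\cycles{k}{r}r^n$. This is the first equality in the claimed chain, and it is essentially just notation. The substantive content is the second equality, and for that I would invoke the power-sum reduction \eqref{powersum} with the weights $\gamma_{k,r}=c(k,r)=\cycles{k}{r}$.

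Applying \eqref{powersum} directly gives
\[
	\sum_{r=1}^{k}\cycles{k}{r}r^n=\sum_{j=1}^{\min(k,n)}j!\,s_2(n,j)\sum_{r=j}^{k}\binom{r}{j}\cycles{k}{r},
\]
so everything reduces to establishing the summation rule \eqref{sumrule} in the form $\sum_{r=j}^{k}\binom{r}{j}\cycles{k}{r}=\cycles{k+1}{j+1}$. This is a classical identity for the unsigned Stirling numbers of the first kind (cycle numbers); the excerpt already flags it as well known and cites \cite[p.\,68, eq.\,(51)]{knut1997}, so I would simply quote it. For a self-contained argument one could prove it combinatorially: to build a permutation of $[k+1]$ with $j+1$ cycles, isolate the cycle containing the element $k+1$, say it has a ground set of size $k+1-r$ chosen together with the rest; more cleanly, use the recurrence $\cycles{k+1}{j+1}=k\cycles{k}{j+1}+\cycles{k}{j}$ and induct on $k$, or expand the generating identity $\rfac{x}{k+1}=(x+k)\,x^{\,k}\big/\text{(shift)}$ — but since the paper treats it as known, a one-line citation suffices. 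Substituting $f(k,j)=\cycles{k+1}{j+1}$ into the displayed formula yields exactly the asserted right-hand side, $\sum_{j=1}^{\min(k,n)}j!\subsets{n}{j}\cycles{k+1}{j+1}$.

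Thus the proof is essentially a two-step derivation: notational normalization of the defining sum, then a single application of \eqref{powersum} combined with the known cycle-number identity \eqref{sumrule}. I expect no real obstacle here — the only thing to be careful about is the range of summation (starting at $r=1$ rather than $r=0$, which is legitimate because $\cycles{k}{0}=\kronecker{k}{0}$ contributes nothing once $k\geq1$, and the $n\geq1$ hypothesis is implicit in \eqref{powersum}), and correctly matching the upper limit $\min(k,n)$ coming from the fact that $s_2(n,j)=0$ for $j>n$ while the weight sum is empty for $j>k$. Since this is precisely Boyadzhiev's Proposition~2.7 \cite[Prop.\,2.7]{boya2009}, one could alternatively just cite it; I would include the short derivation above for completeness.

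\begin{proof}
Since $\cycles{k}{0}=\kronecker{k}{0}$, the term $r=0$ contributes nothing to the sum in \eqref{facnum_upper} when $k\geq1$, and for $n\geq1$ it contributes $0^n=0$ in any case; hence
\[
	[n]_{\overline{k}}=\sum_{r=0}^{k}\cycles{k}{r}r^n=\sum_{r=1}^{k}\cycles{k}{r}r^n.
\]
Applying the power-sum reduction \eqref{powersum} with $\gamma_{k,r}=\cycles{k}{r}$ gives
\[
	\sum_{r=1}^{k}\cycles{k}{r}r^n=\sum_{j=1}^{\min(k,n)}j!\subsets{n}{j}\sum_{r=j}^{k}\binom{r}{j}\cycles{k}{r}.
\]
By the well-known identity for the cycle numbers \cite[p.\,68, eq.\,(51)]{knut1997}, the inner sum equals $\cycles{k+1}{j+1}$, i.e. \eqref{sumrule} holds with $f(k,j)=\cycles{k+1}{j+1}$. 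Substituting this yields
\[
	[n]_{\overline{k}}=\sum_{j=1}^{\min(k,n)}j!\subsets{n}{j}\cycles{k+1}{j+1},
\]
which is the assertion.
\end{proof}
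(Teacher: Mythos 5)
Your proof is correct and follows exactly the route the paper takes: the paper likewise obtains the result by applying \eqref{powersum} with $\gamma_{k,r}=c(k,r)$ and invoking the known identity $\sum_{r=j}^{k}\binom{r}{j}\cycles{k}{r}=\cycles{k+1}{j+1}$ from \cite[p.\,68, eq.\,(51)]{knut1997}, crediting the statement to Boyadzhiev \cite[Prop.\,2.7]{boya2009}. Your handling of the $r=0$ term and the upper limit $\min(k,n)$ is a welcome bit of extra care, but there is no substantive difference in approach.
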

\begin{rem}
Instead of $s_2(n,j)$ Boyadzhiev makes use of a Stirling \emph{function} $S(n,j)$ whose first argument is allowed to be a complex number $n\neq 0$. In this case, the upper summation limit $\min(k,n)$, which appears in \eqref{powersum} and in Proposition \ref{facnum_upper_explicit}, has to be replaced by $k$.
\end{rem}
From now on, the exponent $n$ is assumed to be any positive integer.

Let us turn finally to the question of what result we get from \eqref{powersum} when the \emph{signed} Stirling numbers of the first kind $s_1(k,r)$ are chosen as weights. The answer is given in the following
\begin{prop}\label{facnum_lower_explicit}
\[
	[n]_{\underline{k}}=\sum_{r=1}^{k}(-1)^{k-r}\cycles{k}{r}r^n=\sum_{j=1}^{\min(k,n)}(-1)^{k-j}j!\subsets{n}{j}\left(\cycles{k-1}{j-1}-\cycles{k-1}{j}\right).
\]	
\end{prop}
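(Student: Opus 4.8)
The plan is to mimic the derivation of Proposition~\ref{facnum_upper_explicit} but with the signed weights $\gamma_{k,r}=s_1(k,r)=(-1)^{k-r}\cycles{k}{r}$, which are exactly the weights occurring in \eqref{facnum_lower}. First I would record the starting point: by \eqref{facnum_lower} we have $[n]_{\underline{k}}=\sum_{r=1}^{k}s_1(k,r)r^n$ (the $r=0$ term drops out since $n\geq 1$), and $s_1(k,r)=(-1)^{k-r}\cycles{k}{r}$ by Remark~2.1, which gives the first displayed equality. Then I would invoke the power-sum reduction \eqref{powersum} with these $\gamma_{k,r}$, so that everything comes down to evaluating the inner sum
\[
	\sum_{r=j}^{k}\binom{r}{j}s_1(k,r)=\sum_{r=j}^{k}(-1)^{k-r}\binom{r}{j}\cycles{k}{r},
\]
i.e.\ finding the closed form $f(k,j)$ in \eqref{sumrule} for this choice of weights.

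The heart of the argument is therefore the identity
\[
	\sum_{r=j}^{k}\binom{r}{j}s_1(k,r)=(-1)^{k-j}\left(\cycles{k-1}{j-1}-\cycles{k-1}{j}\right),
\]
after which Proposition~\ref{facnum_lower_explicit} follows by substituting into \eqref{powersum} and pulling the sign $(-1)^{k-j}$ out (note $j!s_2(n,j)\geq 0$, and the $(-1)^{k-r}$ from the $\gamma$'s combines with the $(-1)^{r-j}$ hidden in the reindexing to leave $(-1)^{k-j}$). To prove this inner identity I would argue as follows. Multiply \eqref{stirlingnumbers_1} (with $k$ in place of $n$) by nothing, but instead use the generating-function form: $\sum_{r}s_1(k,r)x^r=\ffac{x}{k}=x(x-1)\cdots(x-k+1)$. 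Then $\sum_{r}\binom{r}{j}s_1(k,r)$ is $\frac{1}{j!}$ times the $j$th derivative at $x=1$ of $\ffac{x}{k}$ divided by\ldots more cleanly: $\sum_{r}\binom{r}{j}s_1(k,r)=\frac{1}{j!}\left.\frac{d^j}{dx^j}\ffac{x}{k}\right|_{x=1}\cdot$? No—$\sum_r \binom{r}{j}x^{r-j}=\frac{1}{j!}\frac{d^j}{dx^j}\sum_r x^r$, so the clean statement is $\sum_{r}\binom{r}{j}s_1(k,r)$ equals the coefficient extraction $[x^j]\,\ffac{x+1}{k}$ is not quite it either. The tidiest route: since $\binom{r}{j}=\sum$ is awkward, I would instead use the Vandermonde-type expansion $\ffac{x}{k}=\ffac{(x-1+1)}{k}=\sum_{j}\binom{k}{j}\,$\ldots no. The correct tool is: $\binom{r}{j}=[y^j](1+y)^r$, hence $\sum_r\binom{r}{j}s_1(k,r)=[y^j]\sum_r s_1(k,r)(1+y)^r=[y^j]\ffac{(1+y)}{k}=[y^j]\,(1+y)y(y-1)\cdots(y-k+2)$, i.e.\ $[y^j]\big((1+y)\cdot\ffac{y}{k-1}\big)$. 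Expanding $(1+y)\ffac{y}{k-1}=\ffac{y}{k-1}+y\ffac{y}{k-1}$ and using \eqref{stirlingnumbers_1} for $\ffac{y}{k-1}=\sum_i s_1(k-1,i)y^i$ gives coefficient of $y^j$ equal to $s_1(k-1,j)+s_1(k-1,j-1)=(-1)^{k-1-j}\cycles{k-1}{j}+(-1)^{k-j}\cycles{k-1}{j-1}=(-1)^{k-j}\big(\cycles{k-1}{j-1}-\cycles{k-1}{j}\big)$, which is exactly $f(k,j)$.

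So the only real obstacle is getting the inner-sum identity right, and the cleanest way to do that is the generating-function manipulation $\sum_r s_1(k,r)(1+y)^r=\ffac{1+y}{k}=(1+y)\ffac{y}{k-1}$ together with a single application of \eqref{stirlingnumbers_1}; everything else is bookkeeping with the sign $(-1)^{k-j}$ and substitution into \eqref{powersum}. Once the inner sum has been simplified, the second displayed equality in the proposition is immediate from \eqref{powersum}, with the upper summation limit $\min(k,n)$ inherited directly from that formula. I would also double-check the edge behaviour ($j=1$ giving $\cycles{k-1}{0}-\cycles{k-1}{1}$, which is $0-\cycles{k-1}{1}$ for $k\geq 2$ and matches $s_1(k,1)=(-1)^{k-1}(k-1)!$ via $\sum_r\binom r1 s_1(k,r)=\frac{d}{dx}\ffac x k|_{\text{coeff}}$), mostly as a sanity test rather than a necessary step in the write-up.
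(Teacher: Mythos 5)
Your argument is correct, but it reaches the key inner-sum evaluation by a genuinely different route than the paper. Both proofs reduce, via \eqref{powersum} with $\gamma_{k,r}=s_1(k,r)$, to showing $\sum_{r=j}^{k}\binom{r}{j}s_1(k,r)=s_1(k-1,j-1)+s_1(k-1,j)=(-1)^{k-j}\bigl(\cycles{k-1}{j-1}-\cycles{k-1}{j}\bigr)$. The paper gets there by first proving the auxiliary upper summation rule $\sum_{r=j+1}^{k}\binom{r}{j}s_1(k,r)=k\,s_1(k-1,j)$ by induction on $k$, using the recurrence $s_1(k+1,r)=s_1(k,r-1)-k\,s_1(k,r)$ twice, and then adding the $r=j$ term and applying the recurrence once more. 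You instead extract the coefficient directly: $\sum_{r}\binom{r}{j}s_1(k,r)=[y^j]\,\ffac{(1+y)}{k}=[y^j]\bigl((1+y)\,\ffac{y}{k-1}\bigr)=s_1(k-1,j)+s_1(k-1,j-1)$, a two-line computation from \eqref{stirlingnumbers_1} and the factorization $\ffac{(1+y)}{k}=(1+y)\,\ffac{y}{k-1}$. Your route is shorter and avoids induction entirely; the paper's route is more elementary (only the Stirling recurrence and binomial addition formula) and produces the intermediate identity \eqref{prf1}, which has some independent interest as a closed-form summation rule in its own right. The surrounding bookkeeping in your write-up (dropping the $r=0$ term for $n\geq1$, converting $s_1$ to cycle numbers, substituting into \eqref{powersum} with upper limit $\min(k,n)$) is all sound; just excise the exploratory false starts in the middle of your draft before writing it up, since the final coefficient-extraction argument stands on its own.
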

\begin{proof}
Evaluating \eqref{powersum} for $\gamma_{k,r}=s_1(k,r)$ requires a new upper summation formula:
\begin{equation}\label{prf1}
	\sum_{r=j+1}^{k}\binom{r}{j}s_1(k,r)=k s_1(k-1,j).
\end{equation}
First, we show by induction on $k$ that \eqref{prf1} holds for every $k\geq 1$ and $0\leq j\leq k-1$. In the case of $k=1$ we have $j=0$, and both sides of \eqref{prf1} are equal to 1. 

The induction step ($k\to k+1$) makes repeated use of the familiar recurrence formula for the Stirling numbers of the first kind (see, e.\,g., \cite[p.\,67]{knut1997}):\\
$s_1(k+1,r)=s_1(k,r-1)-k s_1(k,r)$. Replacing $s_1(k+1,r)$ by $s_1(k,r-1)-k s_1(k,r)$ we obtain
\begin{equation}\label{prf2}
	\sum_{r=j+1}^{k+1}s_1(k+1,r)\binom{r}{j}=\sum_{r=j+1}^{k+1}s_1(k,r-1)\binom{r}{j}-\sum_{r=j+1}^{k+1}k s_1(k,r)\binom{r}{j}.
\end{equation}
By induction hypothesis, the second sum on the right-hand side of \eqref{prf2} is equal to $k^{2}s_1(k-1,j)$. The first sum can be splitted into two parts by applying the familiar basic addition formula for the binomial coefficients. We then have, again by induction hypothesis,
\begin{align*}
	\sum_{r=j+1}^{k+1}s_1(k,r-1)\binom{r-1}{j-1}&=\sum_{r=j}^{k}s_1(k,r)\binom{r}{j-1}=k s_1(k-1,j-1),\\
	\sum_{r=j+1}^{k+1}s_1(k,r-1)\binom{r-1}{j}&=\sum_{r=j}^{k}s_1(k,r)\binom{r}{j}=s_1(k,j)+k s_1(k-1,j).
\end{align*}
Combining these results we obtain for the sum on the left-hand side of \eqref{prf2}
\begin{align*}
	k s_1(k-1,j-1)+s_1(k,j)+k s_1(k-1,j)-k^2 s_1(k-1,j) &=\\
	k\cdot\left[s_1(k-1,j-1)-(k-1)s_1(k-1,j)\right]+s_1(k,j)&=\\
	k s_1(k,j) + s_1(k,j)&=\\
	(k+1)s_1(k,j),
\end{align*}
which completes the induction.

Finally, it follows from \eqref{prf1}: 
\begin{align*}
	\sum_{r=j}^{k}\binom{r}{j}s_1(k,r)&=s_1(k,j)+k s_1(k-1,j)\\
																		&=s_1(k-1,j-1)+s_1(k-1,j)\\
																	  &=(-1)^{k-j}\cycles{k-1}{j-1}-(-1)^{k-j}\cycles{k-1}{j}.
\end{align*}
This yields the asserted equation.
\end{proof}
\begin{rem}
For an alternative proof of Proposition \ref{facnum_lower_explicit} cf. \cite[p.\,253--254]{schr2014}.
\end{rem}
\begin{rem}
One finds a table of $P_{4,\underline{k}}$ for $1\leq k\leq 4$ as well as a table of $[n]_{\underline{k}}$ for $1\leq n,k\leq 7$ in \cite[p.\,131--132]{schr2021}.
\end{rem}
We conclude with some examples for small exponents $n=1,2,3$ which demonstrate that the given power sum indeed undergoes a substantial simplification through the right-hand side expression in the statement of Proposition \ref{facnum_lower_explicit}. 

Let $k\geq 2$, then, cancelling $(-1)^k$ in each equation we obtain the identities:
\begin{alignat*}{3}
	&\sum_{r=1}^{k}(-1)^{r}\cycles{k}{r}r   &&= \cycles{k-1}{1},\\
	&\sum_{r=1}^{k}(-1)^{r}\cycles{k}{r}r^2 &&= 3\cycles{k-1}{1} - 2\cycles{k-1}{2},\\
  &\sum_{r=1}^{k}(-1)^{r}\cycles{k}{r}r^3 &&= 7\cycles{k-1}{1} - 12\cycles{k-1}{2} + 6\cycles{k-1}{3}.
\end{alignat*}

%%
%% ----------------------------------------------------
\appendix
%%-------------------------
%% Bibliographic references
%% Last update 2023-06-24
%%-------------------------

%%-----------------------
\nocite* %% causes bibtex to take over all bibliographic entries

\begin{thebibliography}{00}
%%
\bibitem{boya2009}
\aut{K.\,N. Boyadzhiev}: Power sum identities with generalized Stirling numbers. \bk{Fibonacci Quarterly} \vol{46/47} (2009), 326--330.
%%
\bibitem{comt1974}
\aut{L. Comtet}: \bk{Advanced Combinatorics}, rev. and enlarged edition. Reidel, Dordrecht (Holland) 1974.
%%
\bibitem{hsu1991}
\aut{L.\,C. Hsu}: A summation rule using Stirling numbers of the second kind. \bk{Fibonacci Quarterly} \vol{31} (1993), 256--262.
%%
\bibitem{knut1992}
\aut{D.\,E. Knuth}: Two notes on notation. \bk{Amer. Math. Monthly} \vol{99} (1992), 403--422.
%%
\bibitem{knut1997}
\same: \textit{The Art of Computer Programming. Vol.\,1: Fundamental Algorithms}, 3rd edition. Addision-Wesley, Reading, MA 1997.
%%
\bibitem{lah1955}
\aut{I. Lah}: Eine neue Art von Zahlen, ihre Eigenschaften und Anwendung in der mathematischen Statistik. \bk{Mitt.-Bl. Math. Statistik}. \vol{7} (1955), 203--212.
%%
\bibitem{rior2002}
\aut{J. Riordan}: \bk{Introduction to Combinatorial Analysis}. Reprint, Dover Publ., Inc., Mineola, New York (2002).
%%
\bibitem{schr2014}
\aut{A. Schreiber}: Gewichtete Potenzsummen und Stirling-Zahlen. In: \bk{Didaktische Schriften zur Elementarmathematik}. Logos Verlag, Berlin 2014, p.\,248--255.
%%
\bibitem{schr2015}
\same: Multivariate Stirling polynomials of the first and second kind. \bk{Discrete Math.} \vol{338} (2015), 2462--2484.
%%
\bibitem{schr2020}
\same: Inverse relations and reciprocity laws involving partial Bell polynomials and related extensions. \bk{Enumer. Combin. Appl.} \vol{1}:1 (2021), Article S2R3.
%%
\bibitem{schr2021}
\same: \bk{Stirling Polynomials in Several Indeterminates}. Logos Verlag, Berlin 2021.
%%
\bibitem{stan1986}
\aut{R.\,P. Stanley}: \bk{Enumerative Combinatorics, Vol.\,1}, Wadsworth \& Brooks/Cole, Monterey 1986.
%%	
\end{thebibliography}
\end{document}